\documentclass[11pt,reqno]{amsart} 

\usepackage{amsthm,amsfonts,amscd,amsmath}
\usepackage[hidelinks]{hyperref} 
\usepackage{url}
\usepackage{xcolor}
\hypersetup{
    colorlinks,
    linkcolor={red!50!black},
    citecolor={blue!50!black},
    urlcolor={blue!80!black}
}

\usepackage[normalem]{ulem}
\usepackage{graphicx} 
\usepackage{datetime} 
\usepackage{cancel}
\usepackage{caption,subcaption}
\numberwithin{figure}{section}
\numberwithin{table}{section}

\usepackage{stmaryrd} 
\usepackage{framed} 

\let\citep\cite

\usepackage{upgreek}

\renewcommand{\chi}{\upchi}

\DeclareMathOperator{\Id}{Id}

\DeclareMathOperator{\sgn}{sgn}

\DeclareMathOperator{\Sq}{Sq}

\newcommand{\One}{\mathbb{1}}

\renewcommand{\Re}{\operatorname{Re}}

\title[Magic partitions -- Sign smoothing convolutions]{
       Magic partition functions: 
       Sign smoothing convolutions with 
       Dirichlet invertible arithmetic functions
} 
\author[M. D. Schmidt]{
     Dr.\ Maxie Dion Schmidt \\ 
     \url{maxieds@gmail.com}
} 

\email{\url{maxieds@gmail.com}}

\date{\today} 

\keywords{Arithmetic functions; Dirichlet inverse; Dirichlet convolution; 
          Dirichlet series; sign changes of arithmetic function; 
          smoothing transformations; Cauchy product; discrete convolution. }
\subjclass[2010]{11A25; 11N64; 11N56. }

\allowdisplaybreaks 

\theoremstyle{plain} 
\newtheorem{theorem}{Theorem}

\numberwithin{theorem}{section}

\theoremstyle{definition} 

\newtheorem{remark}[theorem]{Remark}
\newtheorem{definition}[theorem]{Definition}

\begin{document} 

\begin{abstract}  
Sign changes in sums of arithmetic functions and their inverses are a subtle topic 
with room to grow new results. Suppose that $S_f(x) := \sum_{n \leq x} f(n)$ is the 
summatory function of some arithmetic function $f$ such that $f(1) \neq 1$. There are known 
lower bounds on the limiting growth of $V(S_f, Y)$ -- the number of sign changes of $S_f(y)$ 
on the interval $y \in (0, Y]$ as $Y \rightarrow \infty$. 
We observe a partition theoretic 
sign smoothing by discrete convolution of the local oscillatory properties of the 
Dirichlet inverse of $f$, $S_{f^{-1}}(x)$. 
These so-called invertible ``magic partition function`` encodings 
lead to a sequence of convolution sums which have predictable sign properties provided 
the sequence of $f(n)$ ($f^{-1}(n)$, respectively) 
has reasonable asymptotic upper bounds with respect to $n$. 
\end{abstract}

\maketitle

\section{Introduction} 

\subsection{Dirichlet convolutions and Dirichlet inverse functions} 

For any fixed $f$, we 
define its summatory function for all positive integers $x \geq 1$ by 
\[
S_f(x) := \sum_{n \leq x} f(n). 
\]
Given any two arithmetic functions $f$ and $g$, we define their 
\emph{Dirichlet convolution}, $f \ast g$, to be the divisor sum 
\[
(f \ast g)(n) := \sum_{d|n} f(d) g\left(\frac{n}{d}\right), \text{\ for\ all\ } n \geq 1. 
\] 
The multiplicative inverse with respect to Dirichlet convolution is defined by 
$\varepsilon(n) \equiv \delta_{n,1}$ so that $f \ast \varepsilon = \varepsilon \ast f = f$ for 
any arithmetic $f$. If $f(1) \neq 1$, then it is Dirichlet invertible. That is, there is another 
arithmetic function $f^{-1}(n)$ such that $f \ast f^{-1} = f^{-1} \ast f = \varepsilon$. 
Moreover, the function $f^{-1}$ is unique when it exists and satisfies the recursive formula 
\[
f^{-1}(n) = \begin{cases} 
     \frac{1}{f(1)}, & \text{if $n = 1$; } \\ 
     -\frac{1}{f(1)} \times \sum\limits_{\substack{d|n \\ d>1}} 
     f(d) f^{-1}\left(\frac{n}{d}\right), & \text{if $n \geq 2$. } 
     \end{cases} 
\]
Other formulas for $f^{-1}$ when $f$ is any Dirichlet invertible arithmetic function provide limited 
insight into distributions of the signs of the inverse function over $n \geq 1$. 
A partition theoretic motivation for expressing the Dirichlet inverse of any $f$ such that 
$f(1) \neq 0$ provides that for $n > 1$: 
\begin{equation} 
\label{eqn_fInvDirInv_PartitionsFormula} 
f^{-1}(n) = \sum_{k=1}^{\Omega(n)} (-1)^k \left\{ 
     \sum_{{\lambda_1+2\lambda_2+\cdots+k\lambda_k=n} \atop {\lambda_1, \lambda_2, \ldots, \lambda_k | n}} 
     \frac{(\lambda_1+\lambda_2+\cdots+\lambda_k)!}{1! 2! \cdots k!} 
     f(\lambda_1) f(\lambda_2)^2 \cdots f(\lambda_k)^k\right\}. 
\end{equation} 
Let the $m$-fold convolution of an arithmetic function $g$ with itself (i.e., convolve 
$g$ with itself $m$ times in a row at $n$) be denoted by $[g]_{\ast_m}$. 
Mousavi and Schmidt proved that \cite{MOUSAVI-SCHMIDT-2019} 
\begin{equation} 
\label{eqn_} 
f^{-1}(n) = \frac{\varepsilon(n)}{f(1)} + 
     \sum_{j=0}^{\left\lfloor \frac{\Omega(n)}{2} \right\rfloor} \left( 
     [f-f(1)\cdot\varepsilon]_{\ast_{2j+1}}(n) - f(1) \times 
     [f-f(1)\cdot\varepsilon]_{\ast_{2j}}(n)
     \right) \frac{1}{f(1)^{2j+1}}. 
\end{equation} 

\subsection{Some curious convolution experiments with partition functions}

We have experimentally observed an interesting trend of so-called 
\emph{sign smoothing transformations} of certain signed integer sequences under 
discrete convolution based encodings involving special partition functions. 

\begin{definition}
Let the next partition functions be defined for all $n \geq 0$ 
as the coefficients of the following product-based generating functions: 
\begin{subequations}
\begin{align} 
q(n) & := [q^n] \prod_{m \geq 1} (1+q^m) \\ 
\notag 
       & \phantom{:} = [q^n]\left(1+q+q^2+2q^3+2q^4+3q^5+4q^6+5q^7+6q^8+8q^9+10q^{10}+ 
       12q^{11}+\cdots\right) \\ 
q^{\ast}(n) & := [q^n] \prod_{m \geq 1} (1+q^m)^{-1} \\ 
\notag 
       & \phantom{:} = [q^n]\left(1-q-q^3+q^4-q^5+q^6-q^7+2q^8-2q^9 + 
       2q^{10}-2q^{11}+3q^{12} + \cdots \right) \\ 
p^{\ast}(n) & := [q^n] \prod_{m \geq 1} \left(1-q^m\right) \\ 
\notag
       & \phantom{:} = 
       [q^n]\left(1 - q - q^2 + q^5 + q^7 - q^{12} - q^{15} + \cdots \right) \\ 
p(n) & := [q^n] \prod_{m \geq 1} \left(1-q^m\right)^{-1} \\ 
\notag
       & \phantom{:} = 
       [q^n]\left(1 + q + 2q^2 + 3q^3 + 5q^4 + 7q^5 + 11q^6 + 15q^7 + 22q^8 + 
       30q^9 + 42q^{10} + \cdots \right).
\end{align} 
\end{subequations}
In words, $q(n)$ is the number of (orderless) partitions of $n$ into distinct parts, 
which is the same as the number of partitions of $n$ with odd parts, 
$q^{\ast}(n)$ is the Cauchy product inverse of $q(n)$, 
$p(n)$ is the number of (orderless) partitions of $n$, and 
$p^{\ast}(n)$ is the Cauchy product inverse of $p(n)$. 
\end{definition}

\begin{remark}[Asymptotics of the special partition functions]
\label{remark_SpecPartFnAsymptoticExps}
We can apply the 
circle method, or saddle point arguments, 
to the $q$-series generating functions of these sequences to obtain the following 
limiting asymptotics as $n \rightarrow \infty$
\cite{ANALYTIC-COMB,HARDY-RAMANUJAN-ASYMPFORMULAE,HEIKOTODT-THESIS} 
\cite[\S 26.10(v)]{NISTHB}: 
\begin{subequations}
\begin{align} 
\label{eqn_PFuncsP1P2n_asymptotics_stmt_v1} 
q(n) & = \frac{3^{3/4}}{4 \sqrt[4]{3} \cdot n^{\frac{3}{4}}} 
     \exp\left(\pi\sqrt{\frac{n}{3}}\right) \left( 
     1 + O\left(\frac{1}{\sqrt{n}}\right)
     \right) \\ 
q^{\ast}(n) & = \frac{(-1)^n}{2 \cdot 24^{1/4} n^{\frac{3}{4}}} 
     \exp\left(\pi\sqrt{\frac{n}{6}}\right) 
     \left(
     1 + O\left(\frac{1}{\sqrt{n}}\right)
     \right) \\ 
p(n) & = \frac{1}{4\sqrt{3} n} \exp\left(\pi \sqrt{\frac{2n}{3}}\right) \left( 
     1 + O\left(\frac{1}{\sqrt{n}}\right)
     \right).
\end{align} 
\end{subequations}
\end{remark}

\begin{definition}[Convolution-based encodings]
\label{def_TfEncodingsSpecialCasePFunc_examples_v1}
\label{def_Cvl_fPartFns}
We define two invertible transformations, or encodings, that are respective inverses of one another 
on any fixed arithmetic $f$ as the discrete 
convolution sums given by 
\begin{subequations}
\begin{align} 
\label{eqn_TfEncodingsSpecialCasePFunc_examples_v1} 
c_1[f](n) & := \sum_{1 \leq j \leq n} f(j) q(n-j) \\ 
c_2[f](n) & := \sum_{1 \leq j \leq n} f(j) q^{\ast}(n-j) \\ 
c_3[f](n) & := \sum_{1 \leq j \leq n} f(j) p^{\ast}(n-j) \\ 
c_4[f](n) & := \sum_{1 \leq j \leq n} f(j) p(n-j).
\end{align}
\end{subequations}
\end{definition}

Several tables of the convolutions in Definition \ref{def_Cvl_fPartFns} are shown in 
the tables tabulated in the appendix section 
(see Appendix \ref{Appendix_tables}) 
\citep[Appendix A]{MDS-GTTHESIS}. 
The sign-smoothing properties of the functions $f$ ($f^{-1}$, respectively) 
convolved with the special partition functions in 
Definition \ref{def_TfEncodingsSpecialCasePFunc_examples_v1}, we notice the 
following properties: 

\begin{theorem}[Sign smoothing convolutions]
\label{theorem_MagicPartitionsObs_v1}
Let the \emph{sign function}, $\sgn(h(n)) \mapsto \{\pm 1\}$, 
be defined to be $|h(n)| / h(n)$ whenever $h(n) \neq 0$ where 
$\sgn(h(n)) := 1$ if $h(n) = 0$. 
Furthermore, suppose that $f(n) \geq 1$ for all $n$ and that 
$f(n) \ll q^{\ast}(n)$ as $n \rightarrow \infty$. 
Then for all sufficiently large $n$, we have that 
\begin{subequations}
\begin{align}
\tag{A}
\sgn\left\{c_2[f^{-1}](n+1)\right\} & = - \sgn\left\{c_2[f^{-1}](n)\right\}.
\end{align}
We also have that 
\begin{equation}
\tag{B}
\sgn\left\{c_1[f^{-1}](n)\right\}, 
\end{equation}
\end{subequations}
is eventually constant for all sufficiently large $n$. 
\end{theorem}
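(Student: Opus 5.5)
The plan is to treat both parts as Abel-type asymptotic statements about a weighted sum of $g := f^{-1}$. The dominant behaviour should come from $j$ small relative to $n$, where the partition factors have the explicit asymptotics of Remark~\ref{remark_SpecPartFnAsymptoticExps}.

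\textbf{Normalising the sums.} Write
\[
\frac{c_2[g](n)}{q^{\ast}(n)} = \sum_{1 \leq j \leq n} g(j)\,\frac{q^{\ast}(n-j)}{q^{\ast}(n)} .
\]
By Remark~\ref{remark_SpecPartFnAsymptoticExps}, $q^{\ast}(m) = (-1)^m |q^{\ast}(m)|$ for all large $m$. Moreover, for $j = o(n)$,
\[
\frac{|q^{\ast}(n-j)|}{|q^{\ast}(n)|} = \exp\left(-\frac{\pi j}{2\sqrt{6n}} + O\left(\frac{j^2}{n^{3/2}} + \frac{j}{n}\right)\right).
\]
Setting $t_n := \pi/(2\sqrt{6n})$, I would split the sum at $j \leq J_n := n^{1/2+\delta}$ and at $j > J_n$. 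In the first range the normalised sum equals $\sum_j g(j)(-1)^j e^{-t_n j}$ up to a small error. This is the Abel mean at $x = -e^{-t_n} \to -1^{+}$ of the ordinary generating function $G(x) = \sum_{j} g(j)x^j$.

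\textbf{Bounding the range $j > J_n$.} Here I need a growth bound for $g = f^{-1}$. From the recursive formula for $f^{-1}$, or from the Mousavi--Schmidt expansion \eqref{eqn_}, together with $f(n) \geq 1$ and $|f(n)| \ll |q^{\ast}(n)|$, I would derive a bound of the form
\[
|f^{-1}(n)| \ll \left(\max_{d \mid n}|f(d)|\right)^{\Omega(n)} \cdot \#\{\text{ordered factorizations of } n\}.
\]
This is subexponential in $\sqrt{n}$ times $\exp(O(\log^2 n))$, which is far smaller than $|q^{\ast}(n)|$ ratios permit. Concretely, I would check the elementary inequality $|q^{\ast}(n-j)|\,|q^{\ast}(j)|^{\epsilon} \ll |q^{\ast}(n)|\,e^{-c j/\sqrt n}$. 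This follows from concavity of $\sqrt{\cdot}$ in the exponent, and it makes the tail $j > J_n$ contribute $o(1)$ after normalisation.

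\textbf{Concluding (A).} Combining the two ranges gives
\[
c_2[f^{-1}](n) = (-1)^n |q^{\ast}(n)|\bigl(L + o(1)\bigr), \qquad L := \lim_{x \to -1^{+}} G(x).
\]
Hence if $L \neq 0$, the signs alternate for all large $n$, which is (A).

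\textbf{Concluding (B).} For (B) the same scheme applies with $q(n) > 0$ and $q(n-j)/q(n) \approx e^{-s_n j}$, where $s_n = \pi/(2\sqrt{3n})$. This gives
\[
c_1[f^{-1}](n) = q(n)\bigl(\lim_{x \to 1^{-}} G(x) + o(1)\bigr).
\]
Here $G(x) = \sum_j f^{-1}(j)x^j$ is not the Dirichlet series $1/\sum f(n)n^{-s}$ but its power-series analogue. If its Abel limit at $1$ is nonzero, the sign is eventually constant. If that limit is $0$ or $\pm\infty$, I would instead expand to the first nonvanishing term in $s_n$ (the leading power of $t$ in $G(e^{-t})$ as $t \to 0$), whose sign is then fixed.

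\textbf{Main obstacle.} The hard step is the nonvanishing, or at least a definite leading asymptotic, of $G(x)$ as $x \to \pm 1$. The hypotheses on $f$ do not obviously force this, since $f^{-1}$ is itself sign-erratic. My first attempt would be to write $G(e^{-t})$ via Mellin inversion as
\[
\frac{1}{2\pi i}\int \Gamma(s)\,\frac{1}{F(s)}\,t^{-s}\,ds, \qquad F(s) := \sum_{n} f(n)n^{-s},
\]
with $(-1)^j$ handled by the twist $2^{1-s}$-type factor. I would then read off the leading term from the rightmost pole or zero structure of $1/F(s)$. If this fails in general, the argument would need an added hypothesis ensuring $L \neq 0$.
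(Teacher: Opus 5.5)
There are two genuine gaps, and the second cannot be closed under the hypotheses as stated.

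\textbf{The tail estimate.} Your bound $\bigl(\max_{d\mid n}|f(d)|\bigr)^{\Omega(n)}$ times the number of ordered factorizations is not subexponential in $\sqrt n$: the divisor $d=n$ alone gives $f(n)^{\Omega(n)}$. Nor does any bound like $|f^{-1}(j)|\ll|q^{\ast}(j)|^{\epsilon}$ follow from $f\ll q^{\ast}$. At primes $f^{-1}(p)=-f(p)/f(1)^2$, so $|f^{-1}|$ can be as large as $|q^{\ast}|$ itself. Your inequality $|q^{\ast}(n-j)|\,|q^{\ast}(j)|^{\epsilon}\ll|q^{\ast}(n)|\,e^{-cj/\sqrt n}$ is also false in the range $J_n<j\ll\epsilon^2 n$ where you need it, because there $\epsilon\sqrt j$ beats $j/\sqrt n$ in the exponent. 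Moreover, when $|f^{-1}(j)|\asymp|q^{\ast}(j)|$, single terms with $j$ near $n/2$ have size roughly $e^{\pi\sqrt{n/3}}$ (since $\sqrt j+\sqrt{n-j}>\sqrt n$). That is far above $|q^{\ast}(n)|$, so nothing forces the range $j\le J_n$ to dominate. The reduction to an Abel mean is rigorous only under a much stronger assumption, e.g.\ $f$ (hence $f^{-1}$) of polynomial growth. For $f\equiv1$ it gives $(-1)^n c_2[\mu](n)/|q^{\ast}(n)|=\sum_j\mu(j)(-1)^je^{-t_nj}+O(1)$.

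\textbf{The sign of the Abel mean.} Even granting the reduction, (A) amounts to $\sum_j f^{-1}(j)(-1)^je^{-tj}$ having eventually constant sign along $t=t_n$, and (B) to the same for $\sum_j f^{-1}(j)e^{-tj}$. This is exactly what you leave open, and it is the whole content of the statement. Your own Mellin route shows it does not follow from $f\ge1$, $f\ll q^{\ast}$:
\begin{itemize}
\item For $f\equiv1$ one has $\sum_j\mu(j)(-1)^jj^{-s}=-(1+2^{-s})/\bigl((1-2^{-s})\zeta(s)\bigr)$.
\item Multiplied by $\Gamma(s)$, this has a double pole at $s=0$ (producing $\frac{4}{\log 2}\log\frac1t$), a pole at every nontrivial zero $\rho$ of $\zeta$, and no singularity on the real half-line $s>0$.
\item Landau's theorem on Mellin transforms of eventually one-signed functions then makes the Abel mean minus this logarithm $\Omega_{\pm}(t^{-\sigma})$ for every $\sigma<\frac12$.
\item Since $t_n-t_{n+1}\asymp t_n^{3}$, this transfers to the points $t_n$. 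Hence $(-1)^nc_2[\mu](n)$ takes both signs infinitely often, and (A) fails for this $f$.
\item The same argument with $\Gamma(s)/\zeta(s)$ shows $c_1[\mu](n)$ changes sign infinitely often, so (B) fails too.
\end{itemize}
So your fallback (``expand to the first nonvanishing term, whose sign is then fixed'') cannot work. The leading behaviour is oscillatory and governed by the zeros of $\zeta$, with amplitude damped by $|\Gamma(\rho)|$; this is why nothing shows up in the tables. Some extra hypothesis controlling the singularities of the relevant Dirichlet series in $\Re(s)\ge0$ is genuinely needed.

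The paper's own argument does not supply this ingredient either. Replacing $q^{\ast}(n+1-j)$ by $-q^{\ast}(n-j)$ termwise costs about $\sum_j|f^{-1}(j)|\,|q^{\ast}(n-j)|/\sqrt{n-j}$. Declaring this and $f^{-1}(n+1)$ to be $o\bigl(c_2[f^{-1}](n)\bigr)$ presupposes precisely the absence of cancellation that you isolated as $L\neq0$.
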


Depending on the fixed sequence, $f(n)$, we sometimes have analogous properties of the 
convolution sequences, $c_3[f](n)$ and $c_4[f](n)$, though this is not always the case 
(see the tables in Appendix \ref{Appendix_tables}). 
The exponentially dominant asymptotic properties of each of the four special 
partition functions shown in 
Remark \ref{remark_SpecPartFnAsymptoticExps} 
suggest generalizations of 
Theorem \ref{theorem_MagicPartitionsObs_v1} 
as a topic for future work. 

\subsection{Local sign changes of an arithmetic function} 

The sign changes of an arithmetic function $f$ are often considered in applications 
where we must estimate the growth of sums depending on $f$ when $f$ (or especially $f^{-1}$) 
oscillates in sign. 
There are several existing results that characterize the expected number of sign changes of 
well enough behaved $f$ on increasingly large intervals of consecutive integers. 
Let $V(f, Y)$ denote the number of sign changes of $f$ on the interval $(0, Y]$ for 
real $Y > 0$: 
\[
V(f, Y) := \sup \left\{N: \exists \{x_i\}_{i=1}^N, 0<x_1<\cdots<x_N \leq Y, 
     f(x_i) \neq 0, \sgn(f(x_i)) \neq \sgn(f(x_{i+1})), \forall 1 \leq i < N\right\}. 
\]
It is known that the analytic properties, poles, and zeros of the 
Dirichlet generating function (DGF) of $f$ provide key insights into the 
sign changes of these functions \cite{OSCPROPS-ARITHFUNCSI}. 
For example, if the DGF of $f$ is analytic on some half-plane, subject to certain 
restrictions, then Landau showed in 1905 that 
the summatory function of $f$, $S_f(x)$, changes signs infinitely often as we let 
$x$ tend to infinity. We also have the next 
theorem that extends Landau's and which provides a more precise minimal 
statement concerning the frequency of the sign changes of $S_f(x)$. 

\begin{theorem}[P\'olya] 
Suppose that $S_f(x)$ is real-valued for all $x \geq x_0$, and define the function 
$\hat{F}_f(s)$ by the Mellin transform at $-s$ as 
\[
\hat{F}_f(s) := \int_{x_0}^{\infty} \frac{S_f(x)}{x^{s+1}} dx. 
\]
Suppose that $\hat{F}_f(s)$ is analytic for all $\Re(s) > \theta$, but is not analytic 
in any half-plane $\Re(s) > \theta - \varepsilon$ for $\varepsilon > 0$. 
Furthermore, suppose that $\hat{F}_f(s)$ is meromorphic in some half-plane 
$\Re(s) > \theta - c_0$ for some $c_0 > 0$. Let 
\[
\gamma_f := \begin{cases} 
     \inf \{|t|: \hat{F}_f(s) \text{\ is not analytic at\ } s = \theta+\imath t\}, & \text{\rm
     if $f$ is not analytic at $\Re(s) = \theta$; } \\ 
     \infty, & \text{\rm otherwise.}
     \end{cases}. 
 \]
Then 
 \[
 \limsup_{Y \rightarrow \infty} \left\{\frac{V(S_f, Y)}{\log Y}\right\} \geq \frac{\gamma_f}{\pi}. 
\]
\end{theorem}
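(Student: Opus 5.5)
We argue by contradiction, along the lines of the classical Landau–Pólya proof. We may assume $\gamma_f<\infty$; otherwise there is nothing to prove. Suppose the conclusion fails. Then there are $\gamma<\gamma_f$ and $Y_0$ with $V(S_f,Y)\leq \frac{\gamma}{\pi}\log Y + O(1)$ for all $Y\geq Y_0$.

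The key device is to multiply $S_f$ by a trigonometric polynomial in $\log x$ that has few sign changes. The plan is to construct a real function
\[
T(x)=\sum_{|k|\le K} a_k x^{\imath \tau_k},\qquad a_{-k}=\overline{a_k},\ \tau_{-k}=-\tau_k,\ |\tau_k|<\gamma_f ,
\]
whose sign changes in $\log x$ interlace with those of $S_f$, so that $S_f(x)T(x)$ has constant sign for large $x$. Pólya's lemma on counting zeros of exponential sums of bounded frequency supplies such a $T$. The sign changes of $S_f$ on $(0,Y]$ number at most about $\frac{\gamma}{\pi}\log Y$, and a trigonometric polynomial with frequencies in $[-\gamma',\gamma']$ can realize sign changes at density $\gamma'/\pi$ in $\log x$. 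Choosing $\gamma<\gamma'<\gamma_f$ therefore gives room to match every sign change of $S_f$ beyond some $x_1$, so that $S_f(x)T(x)\geq 0$ for $x\geq x_1$.

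Having fixed $T$, we apply Landau's theorem on integrals with eventually nonnegative integrand. Consider
\[
G(s):=\int_{x_1}^\infty \frac{S_f(x)T(x)}{x^{s+1}}\,dx=\sum_k a_k\,\hat F_f(s-\imath\tau_k)+(\text{entire}).
\]
Since the integrand is eventually of one sign, Landau's theorem says that the real point on the abscissa of convergence of $G$ is a singularity of $G$. Each shift $\hat F_f(s-\imath\tau_k)$ is analytic on $\Re(s)>\theta$. It is also analytic at $s=\theta$, because $|\tau_k|<\gamma_f$ and, by the definition of $\gamma_f$, $\hat F_f$ is analytic at $\theta+\imath t$ for $|t|<\gamma_f$. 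Meromorphy in $\Re(s)>\theta-c_0$ then shows that $G$ continues analytically to a real segment reaching slightly left of $\theta$. By Landau's theorem, $G$ therefore converges and is analytic in some half-plane $\Re(s)>\theta-\delta$.

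It remains to extract a contradiction from this. Writing $\hat F_f$ in terms of $G$ requires inverting the multiplication by $T$, which is awkward. Instead we use positivity directly. From $|S_fT|=S_fT$ we get absolute convergence of $\int |S_f T|x^{-\sigma-1}dx$ for $\sigma>\theta-\delta$. Pólya's construction can also ensure that $|T(x)|$ is bounded below away from small neighbourhoods of its zeros, and those zeros lie near the sign changes of $S_f$. With this, the growth of $|S_f|$ is controlled, and $\hat F_f$ becomes analytic on a vertical strip crossing $\Re(s)=\theta$. That contradicts the assumption that $\hat F_f$ is not analytic in any half-plane $\Re(s)>\theta-\varepsilon$.

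The main obstacle is the construction of $T$ together with this final lower-bound control on $|T|$, that is, the quantitative zero-counting and interpolation lemma for exponential sums. For it we would cite Pólya's original argument; the rest is a routine application of Landau's theorem.
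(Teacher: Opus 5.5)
The paper gives no proof of this statement; it quotes it from the literature as background. Your argument therefore has to stand on its own, and its central step fails: a finite real exponential sum $T(x)=\sum_{|k|\le K}a_kx^{\imath\tau_k}$ with $S_f(x)T(x)\ge 0$ for all $x\ge x_1$ does not exist in general.

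Such a $T$ must change sign at every sign change of $S_f$ beyond $x_1$ and share the sign of $S_f$ in between. So it must interpolate infinitely many prescribed sign changes, constrained only by your bound $V(S_f,Y)\le\frac{\gamma}{\pi}\log Y+O(1)$, using finitely many parameters. Worse, $u\mapsto T(e^u)$ is almost periodic. A real almost periodic function that is $\ge 0$ on arbitrarily long intervals and $\le 0$ on others vanishes identically, because a positive value, if there is one, recurs approximately in every interval of some fixed length. So if $S_f$ has infinitely many sign changes whose gaps in $\log x$ tend to infinity, which your negated hypothesis allows, no such $T$ exists.

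P\'olya's lemma on zeros of exponential sums is an upper bound for zero counts, not an interpolation result. Choosing $\gamma<\gamma'<\gamma_f$ matches counts, not locations. What is needed is a multiplier with infinitely many degrees of freedom, e.g.\ an entire function $E$ of exponential type $<\gamma_f$ with simple real zeros exactly at the sign changes in the variable $\log x$. Then $G$ is no longer $\sum_k a_k\hat F_f(s-\imath\tau_k)$ plus an entire function. It becomes a Borel-transform integral $\frac{1}{2\pi\imath}\oint_C B(w)\hat F_f(s-w)\,dw$. Continuing that to $s=\theta$ requires control of the growth of $E$ on the real axis, so that its conjugate indicator diagram stays near $[-\imath\gamma',\imath\gamma']$, and this is not automatic when sign changes cluster. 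Given a multiplier of your form, the Landau step itself would be fine.

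The closing step is a second genuine gap, and the real-variable inference it rests on is false. Positivity and Landau give only $\int|S_fT|\,x^{-\sigma-1}dx<\infty$ for $\sigma>\theta-\delta$. Lower bounds on $|T|$ away from its zeros say nothing about $S_f$ near its sign changes, which is exactly where $T$ must vanish. $S_f$ is a step function that can jump at a sign change $x_n$ between large values of opposite sign. Meanwhile $T(x)=O(|x-x_n|/x_n)$ on $[x_n-1,x_n+1]$ for a multiplier with bounded derivative in $\log x$, such as yours. If $S_f$ carries most of its weighted mass on these unit intervals, then $\int|S_fT|\,x^{-\sigma-1}dx$ can converge up to one unit further left than $\int|S_f|\,x^{-\sigma-1}dx$. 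So neither control of $|S_f|$ nor continuation of $\hat F_f$ follows. The contradiction has to come from a further use of the analytic hypotheses on $\hat F_f$ at this stage. That is where the substance of P\'olya's theorem lies, and your sketch defers it entirely.

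Finally, your opening reduction is wrong. When $\gamma_f=\infty$ the statement asserts that $V(S_f,Y)/\log Y$ is unbounded, which is the strongest case, not a vacuous one. Your contradiction scheme would cover it if it worked.
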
 

\section{Proof of the main theorem} 

\begin{proof}[Proof of Theorem \ref{theorem_MagicPartitionsObs_v1}(A)]
To simplify notation, we define the constants 
\[
A := \frac{1}{2 \cdot 24^{\frac{1}{4}}}, k := \frac{\pi}{\sqrt{6}}, 
\]
i.e., so that 
\[
q^{\ast}(n) = \frac{(-1)^n A \exp\left(k \sqrt{n}\right)}{n^{\frac{3}{4}}} \left( 
     1 + O\left(\frac{1}{\sqrt{n}}\right) 
     \right). 
\]
We then compute the following difference for $1 \leq j \leq n$: 
\begin{align*}
q^{\ast}(n + 1 - j) - q^{\ast}(n - j) & = (-1)^{n+1-j} A \left[ 
     \frac{\exp\left(k \sqrt{n+1-j}\right)}{(n+1-j)^{\frac{3}{4}}} - 
     \frac{\exp\left(k \sqrt{n-j}\right)}{(n-j)^{\frac{3}{4}}}
     \right] \left(1 + O\left(\frac{1}{\sqrt{n+1-j}}\right)\right) \\ 
     & = \frac{(-1)^{n+1-j} A}{(n+1-j)^{\frac{3}{4}}} \left[ 
     \exp\left(k \sqrt{n+1-j}\right) + 
     \frac{\exp\left(k \sqrt{n-j}\right)}{\left(1 - \frac{1}{n+1-j}\right)^{\frac{3}{4}}}
     \right] \left(1 + O\left(\frac{1}{\sqrt{n+1-j}}\right)\right) \\ 
     & = \left(\frac{2A (-1)^{n+1-j} \exp\left(k \sqrt{n+1-j}\right)}{ 
     (n+1-j)^{\frac{3}{4}}} + O\left(\frac{\exp\left(k \sqrt{n+1-j}\right)}{ 
     (n+1-j)^{\frac{9}{4}}}\right)\right) \times \\ 
     & \phantom{\qquad} \times 
     \left(1 + O\left(\frac{1}{\sqrt{n+1-j}}\right)\right) 
\end{align*} 
It follows that this difference of the encoding sums satisfies 
\[
c_2[f^{-1}](n + 1) - c_2[f^{-1}](n) = f^{-1}(n + 1) - 2 c_2[f^{-1}](n) + 
     o\left(c_2[f^{-1}](n)\right). 
\]
We assert that because of our assumption on the asymtotic growth of $f(n)$ as 
$n \rightarrow \infty$, we have that 
$f^{-1}(n + 1) = o\left(c_2[f^{-1}](n)\right)$ as $n \rightarrow \infty$. 
This completes the proof that (A) holds for all sufficiently large $n$. 
\end{proof}

A similar, but somewhat repetitive, argument shows that part (B) of the theorem 
holds when we convolve with the strictly positive partition function, $q(n)$. 


\section{Conclusions} 
\label{subSection_Concl_OpenQsGens}

We have several Dirichlet convolution identities of the form 
$\lambda^{-1} = \left(\One_{\Sq} \ast \mu\right)^{-1}$, 
$\varphi^{-1} = \left(\Id_1 \ast \mu\right)^{-1}$, and 
$\mu^{-1} = 1$ where 
$\One_{\Sq}$ is the characteristic function of the squares and where 
$\Id_1(n) = n$ for all $n \geq 1$ is the identity function. 
The summatory functions $L(x) := \sum_{n \leq x} \lambda(n)$ and 
$M(x) := \sum_{n \leq x} \mu(n)$ have signed, oscillatory summands whose partial sums are 
notoriously hard to estimate precisely -- especially within short intervals. 
This point also helps us to motivate why the particularly interesting cases of the 
tables in Appendix \ref{Appendix_tables} 
highlight the sign smoothing properties of the signed Dirichlet inverse functions, $f^{-1}$. 
Note that the discrete (Cauchy product, or generating function) convolutions prescribed by 
Definition \ref{def_TfEncodingsSpecialCasePFunc_examples_v1} 
are invertible so that any such partition function 
encoding over $f$ ($f^{-1}$, respectively) 
can be reversed to recover the terms of the original sequence. 

\subsection*{Acknowledgements} 

Thanks to Dr.\ Hamed Mousavi at the University of Bristol 
for suggesting generalizations to the observations on the magic partition function 
experiments from my doctoral thesis at Georgia Tech.

\bigskip\hrule\medskip
\appendix
\clearpage

\clearpage
\section{Tables of experiments}
\label{Appendix_tables} 

\begin{table}[ht!]
\small
\begin{tabular}{|l||l|l|l|l|l||l|l|l|l|l|} \hline
$n$ & $f(n)$ & $c_1[f](n)$ & $c_2[f](n)$ & $c_3[f](n)$ & $c_4[f](n)$ & $f^{-1}(n)$ & $c_1[f^{-1}](n)$ & $c_2[f^{-1}](n)$ & $c_3[f^{-1}](n)$ & $c_4[f^{-1}](n)$ \\ \hline
$1$ & $1$ & $1$ & $1$ & $1$ & $1$ & $1$ & $1$ & $1$ & $1$ & $1$ \\ \hline
$2$ & $1$ & $2$ & $0$ & $2$ & $2$ & $-1$ & $0$ & $-2$ & $0$ & $0$ \\ \hline
$3$ & $2$ & $4$ & $1$ & $4$ & $5$ & $-2$ & $-2$ & $-1$ & $-2$ & $-1$ \\ \hline
$4$ & $2$ & $7$ & $-1$ & $7$ & $9$ & $-1$ & $-2$ & $0$ & $-2$ & $-2$ \\ \hline
$5$ & $4$ & $12$ & $2$ & $12$ & $18$ & $-4$ & $-7$ & $-1$ & $-7$ & $-7$ \\ \hline
$6$ & $2$ & $17$ & $-4$ & $17$ & $28$ & $2$ & $-6$ & $6$ & $-6$ & $-8$ \\ \hline
$7$ & $6$ & $27$ & $4$ & $27$ & $50$ & $-6$ & $-13$ & $-7$ & $-13$ & $-21$ \\ \hline
$8$ & $4$ & $39$ & $-6$ & $39$ & $76$ & $-1$ & $-20$ & $8$ & $-20$ & $-30$ \\ \hline
$9$ & $6$ & $53$ & $5$ & $53$ & $121$ & $-2$ & $-23$ & $-5$ & $-23$ & $-51$ \\ \hline
$10$ & $4$ & $74$ & $-10$ & $74$ & $178$ & $4$ & $-31$ & $15$ & $-31$ & $-69$ \\ \hline
\end{tabular}
\caption{Sign smoothing transformations when $f(n) \equiv \varphi(n)$ is the Euler totient function}
\end{table}

\begin{table}[ht!]
\small
\begin{tabular}{|l||l|l|l|l|l||l|l|l|l|l|} \hline
$n$ & $f(n)$ & $c_1[f](n)$ & $c_2[f](n)$ & $c_3[f](n)$ & $c_4[f](n)$ & $f^{-1}(n)$ & $c_1[f^{-1}](n)$ & $c_2[f^{-1}](n)$ & $c_3[f^{-1}](n)$ & $c_4[f^{-1}](n)$ \\ \hline
$1$ & $1$ & $1$ & $1$ & $1$ & $1$ & $1$ & $1$ & $1$ & $1$ & $1$ \\ \hline
$2$ & $2$ & $3$ & $1$ & $3$ & $3$ & $-2$ & $-1$ & $-3$ & $-1$ & $-1$ \\ \hline
$3$ & $2$ & $5$ & $0$ & $5$ & $6$ & $-2$ & $-3$ & $0$ & $-3$ & $-2$ \\ \hline
$4$ & $3$ & $9$ & $0$ & $9$ & $12$ & $1$ & $-1$ & $2$ & $-1$ & $-2$ \\ \hline
$5$ & $2$ & $13$ & $-2$ & $13$ & $20$ & $-2$ & $-5$ & $0$ & $-5$ & $-6$ \\ \hline
$6$ & $4$ & $20$ & $1$ & $20$ & $35$ & $4$ & $-2$ & $5$ & $-2$ & $-5$ \\ \hline
$7$ & $2$ & $28$ & $-4$ & $28$ & $54$ & $-2$ & $-4$ & $-6$ & $-4$ & $-12$ \\ \hline
$8$ & $4$ & $39$ & $2$ & $39$ & $86$ & $0$ & $-9$ & $4$ & $-9$ & $-16$ \\ \hline
$9$ & $3$ & $54$ & $-4$ & $54$ & $128$ & $1$ & $-6$ & $-4$ & $-6$ & $-24$ \\ \hline
$10$ & $4$ & $71$ & $4$ & $71$ & $192$ & $4$ & $-7$ & $8$ & $-7$ & $-28$ \\ \hline
\end{tabular}
\caption{Sign smoothing transformations when $f(n) \equiv d(n)$ is the divisor function}
\end{table}

\begin{table}[ht!]
\small
\begin{tabular}{|l||l|l|l|l|l||l|l|l|l|l|} \hline
$n$ & $f(n)$ & $c_1[f](n)$ & $c_2[f](n)$ & $c_3[f](n)$ & $c_4[f](n)$ & $f^{-1}(n)$ & $c_1[f^{-1}](n)$ & $c_2[f^{-1}](n)$ & $c_3[f^{-1}](n)$ & $c_4[f^{-1}](n)$ \\ \hline
$1$ & $1$ & $1$ & $1$ & $1$ & $1$ & $1$ & $1$ & $1$ & $1$ & $1$ \\ \hline
$2$ & $2$ & $3$ & $1$ & $3$ & $3$ & $-2$ & $-1$ & $-3$ & $-1$ & $-1$ \\ \hline
$3$ & $2$ & $5$ & $0$ & $5$ & $6$ & $-2$ & $-3$ & $0$ & $-3$ & $-2$ \\ \hline
$4$ & $2$ & $8$ & $-1$ & $8$ & $11$ & $2$ & $0$ & $3$ & $0$ & $-1$ \\ \hline
$5$ & $2$ & $12$ & $-1$ & $12$ & $19$ & $-2$ & $-4$ & $-1$ & $-4$ & $-5$ \\ \hline
$6$ & $3$ & $18$ & $0$ & $18$ & $32$ & $5$ & $0$ & $6$ & $0$ & $-2$ \\ \hline
$7$ & $2$ & $25$ & $-2$ & $25$ & $50$ & $-2$ & $-1$ & $-8$ & $-1$ & $-8$ \\ \hline
$8$ & $2$ & $34$ & $-1$ & $34$ & $77$ & $-2$ & $-8$ & $3$ & $-8$ & $-11$ \\ \hline
$9$ & $2$ & $46$ & $-1$ & $46$ & $115$ & $2$ & $-2$ & $-3$ & $-2$ & $-15$ \\ \hline
$10$ & $3$ & $61$ & $2$ & $61$ & $170$ & $5$ & $-1$ & $10$ & $-1$ & $-14$ \\ \hline
\end{tabular}
\caption{Sign smoothing transformations when $f(n) \equiv \omega + 1$}
\end{table}

\begin{table}[ht!]
\small
\begin{tabular}{|l||l|l|l|l|l||l|l|l|l|l|} \hline
$n$ & $f(n)$ & $c_1[f](n)$ & $c_2[f](n)$ & $c_3[f](n)$ & $c_4[f](n)$ & $f^{-1}(n)$ & $c_1[f^{-1}](n)$ & $c_2[f^{-1}](n)$ & $c_3[f^{-1}](n)$ & $c_4[f^{-1}](n)$ \\ \hline
$1$ & $1$ & $1$ & $1$ & $1$ & $1$ & $1$ & $1$ & $1$ & $1$ & $1$ \\ \hline
$2$ & $3$ & $4$ & $2$ & $4$ & $4$ & $-3$ & $-2$ & $-4$ & $-2$ & $-2$ \\ \hline
$3$ & $15$ & $19$ & $12$ & $19$ & $20$ & $-15$ & $-17$ & $-12$ & $-17$ & $-16$ \\ \hline
$4$ & $105$ & $125$ & $89$ & $125$ & $129$ & $-96$ & $-112$ & $-82$ & $-112$ & $-114$ \\ \hline
$5$ & $945$ & $1073$ & $838$ & $1073$ & $1094$ & $-945$ & $-1060$ & $-845$ & $-1060$ & $-1075$ \\ \hline
$6$ & $10395$ & $11484$ & $9437$ & $11484$ & $11617$ & $-10305$ & $-11379$ & $-9349$ & $-11379$ & $-11495$ \\ \hline
\end{tabular}
\caption{Sign smoothing transformations when $f(n) \equiv n!!$ is the double factorial function. 
         Notice that the growth of this function is approximately 
         $n!! \sim C \cdot \left(\frac{n}{e}\right)^n$ with $C > 0$ a constant, 
         which is by far asymptotically dominant compared to the partition function asymptotics given in 
         Remark \ref{remark_SpecPartFnAsymptoticExps}. 
         Then we can compute that the convolution encoding properties from part (A) of Theorem \ref{theorem_MagicPartitionsObs_v1} 
         fail for this sequence of $f(n)$. }
\end{table}

\begin{table}[ht!]
\small
\begin{tabular}{|l||l|l|l|l|l||l|l|l|l|l|} \hline
$n$ & $f(n)$ & $c_1[f](n)$ & $c_2[f](n)$ & $c_3[f](n)$ & $c_4[f](n)$ & $f^{-1}(n)$ & $c_1[f^{-1}](n)$ & $c_2[f^{-1}](n)$ & $c_3[f^{-1}](n)$ & $c_4[f^{-1}](n)$ \\ \hline
$1$ & $-1$ & $-1$ & $-1$ & $-1$ & $-1$ & $-1$ & $-1$ & $-1$ & $-1$ & $-1$ \\ \hline
$2$ & $0$ & $-1$ & $1$ & $-1$ & $-1$ & $0$ & $-1$ & $1$ & $-1$ & $-1$ \\ \hline
$3$ & $-1$ & $-2$ & $-1$ & $-2$ & $-3$ & $1$ & $0$ & $1$ & $0$ & $-1$ \\ \hline
$4$ & $1$ & $-2$ & $3$ & $-2$ & $-3$ & $-1$ & $-2$ & $-1$ & $-2$ & $-3$ \\ \hline
$5$ & $-1$ & $-3$ & $-3$ & $-3$ & $-7$ & $1$ & $-1$ & $1$ & $-1$ & $-3$ \\ \hline
$6$ & $1$ & $-4$ & $4$ & $-4$ & $-8$ & $-1$ & $-2$ & $-2$ & $-2$ & $-6$ \\ \hline
$7$ & $-1$ & $-5$ & $-5$ & $-5$ & $-15$ & $1$ & $-3$ & $3$ & $-3$ & $-7$ \\ \hline
$8$ & $2$ & $-6$ & $7$ & $-6$ & $-17$ & $-2$ & $-4$ & $-5$ & $-4$ & $-13$ \\ \hline
$9$ & $-2$ & $-8$ & $-10$ & $-8$ & $-30$ & $1$ & $-5$ & $5$ & $-5$ & $-15$ \\ \hline
$10$ & $2$ & $-10$ & $11$ & $-10$ & $-35$ & $-2$ & $-7$ & $-6$ & $-7$ & $-26$ \\ \hline
\end{tabular}
\caption{Sign smoothing transformations when $f(n) \equiv q^{\ast}(n)$ 
         (to show challenges to growth of $f$ compared to the corresponding partition functions)}
\end{table}

\begin{table}[ht!]
\small
\begin{tabular}{|l||l|l|l|l|l||l|l|l|l|l|} \hline
$n$ & $f(n)$ & $c_1[f](n)$ & $c_2[f](n)$ & $c_3[f](n)$ & $c_4[f](n)$ & $f^{-1}(n)$ & $c_1[f^{-1}](n)$ & $c_2[f^{-1}](n)$ & $c_3[f^{-1}](n)$ & $c_4[f^{-1}](n)$ \\ \hline
$1$ & $1$ & $1$ & $1$ & $1$ & $1$ & $1$ & $1$ & $1$ & $1$ & $1$ \\ \hline
$2$ & $2$ & $3$ & $1$ & $3$ & $3$ & $-2$ & $-1$ & $-3$ & $-1$ & $-1$ \\ \hline
$3$ & $3$ & $6$ & $1$ & $6$ & $7$ & $-3$ & $-4$ & $-1$ & $-4$ & $-3$ \\ \hline
$4$ & $5$ & $12$ & $1$ & $12$ & $15$ & $-1$ & $-4$ & $1$ & $-4$ & $-5$ \\ \hline
$5$ & $7$ & $21$ & $1$ & $21$ & $29$ & $-7$ & $-13$ & $-3$ & $-13$ & $-15$ \\ \hline
$6$ & $11$ & $36$ & $2$ & $36$ & $54$ & $1$ & $-14$ & $8$ & $-14$ & $-20$ \\ \hline
$7$ & $15$ & $59$ & $1$ & $59$ & $95$ & $-15$ & $-31$ & $-15$ & $-31$ & $-49$ \\ \hline
$8$ & $22$ & $94$ & $3$ & $94$ & $163$ & $-10$ & $-52$ & $11$ & $-52$ & $-77$ \\ \hline
$9$ & $30$ & $146$ & $2$ & $146$ & $270$ & $-21$ & $-77$ & $-17$ & $-77$ & $-141$ \\ \hline
$10$ & $42$ & $222$ & $5$ & $222$ & $439$ & $-14$ & $-117$ & $26$ & $-117$ & $-214$ \\ \hline
\end{tabular}
\caption{Sign smoothing transformations when $f(n) \equiv p(n)$ is the ordinary partition function}
\end{table}

\end{document}